\newtheorem{theorem}{Theorem}
\newtheorem{corollary}[theorem]{Corollary}
\newtheorem{lemma}[theorem]{Lemma}
\newtheorem{remark}{Remark}
\newtheorem{definition}{Definition}[section]
\theoremstyle{definition}
\newcommand{\beql}[1]{\begin{equation}\label{#1}}
	\newcommand{\eeq}{\end{equation}}
\newcommand{\comment}[1]{}
\newcommand{\Ds}{\displaystyle}
\newcommand{\Abs}[1]{{\left|{#1}\right|}}
\newcommand{\Floor}[1]{{\left\lfloor{#1}\right\rfloor}}
\newcommand{\Set}[1]{{\left\{{#1}\right\}}}
\newcommand{\RR}{{\mathbb R}}
\newcommand{\CC}{{\mathbb C}}
\newcommand{\ZZ}{{\mathbb Z}}
\newcommand{\NN}{{\mathbb N}}
\newcommand{\TT}{{\mathbb T}}
\newcommand{\one}{{\bf 1}}
\newcommand{\ft}[1]{\widehat{#1}}
\newcounter{rem}
\newcounter{step}
\newcounter{mysec}
\newcounter{mysubsec}[mysec]
\begin{document}
	
	\title{How many Fourier coefficients are needed?}
	
	\author{Benedikt Diederichs}
	\address{Institute of Biological and Medical Imaging, Helmholtz Zentrum München, 85764 Neuherberg, Germany}
	\email{benedikt.diederichs@helmholtz-muenchen.de}
	
	\author{Mihail N. Kolountzakis}
	\address{Department of Mathematics and Applied Mathematics, University of Crete, Voutes Campus, 70013 Heraklion, Crete, Greece.}
	\email{kolount@gmail.com}
	
	\author{Effie Papageorgiou}
	\address{Department of Mathematics and Applied Mathematics, University of Crete, Voutes Campus, 70013 Heraklion, Crete, Greece.}
	\email{papageoeffie@gmail.com}

	\thanks{Supported by the Hellenic Foundation for Research and Innovation, Project HFRI-FM17-1733 and by grant No 4725 of the University of Crete.}
	
	\begin{abstract}
		We are looking at families of functions or measures on the torus which are specified by a finite number of parameters $N$. The task, for a given family, is to look at a small number of Fourier coefficients of the object, at a set of locations that is predetermined and may depend only on $N$, and determine the object. We look at (a) the indicator functions of at most $N$ intervals of the torus and (b) at sums of at most $N$ complex point masses on the multidimensional torus. In the first case we reprove a theorem of Courtney which says that the Fourier coefficients at the locations $0, 1, \ldots, N$ are sufficient to determine the function (the intervals). In the second case we produce a set of locations of size $O(N \log^{d-1} N)$ which suffices to determine the measure.
	\end{abstract}
	
	\keywords{Interpolation, sparse exponential sums, non-harmonic exponential sums, Fourier coefficients, inverse problem}
	
	\makeatletter
	\@namedef{subjclassname@2020}{\textup{2020} Mathematics Subject Classification}
	\makeatother
	\subjclass[2020]{41A05, 41A27, 42A15, 42A16}
	
	\maketitle
	
	\tableofcontents
	
	\setlength{\parskip}{0.5em}

	\section{Introduction}
	
	Assume that the function $f$ belongs to a given $k$-parameter explicit family of functions.
	Can we recover $f$ by looking at $k$ (or, at least, not many more than $k$) of its Fourier coefficients?
	This situation is often called ``a signal with a finite rate of innovation'' in the engineering literature \cite{vetterli2002sampling}.
	We are particularly interested in families where the dependence on the parameters is non-linear.
	
	The recovery of a function by not too many of its Fourier coefficients may be viewed as a problem in the general field of sparse representation (see, e.g., \cite{candes2006robust}). The specific rules that apply to this paper are the following:
	\begin{itemize}
		\item We consider functions or measures on the torus $\TT=\RR/\ZZ$ or $\TT^d$, $d\in\NN$. The classes of functions we examine have a finite number of degrees of freedom whose number is constrained by the parameter $N$. For instance we might consider sums of point masses on $\TT$ with the number of points being at most $N$.
		\item We seek an a priori known finite set $\Omega = \Omega_N$ of Fourier coefficients (a subset of $\ZZ$ or $\ZZ^d$) which are assumed to be known for our class of functions. The set $\Omega$ is allowed to depend on $N$ and on nothing else.
		\item The aim is to show that the mapping $f \to \ft{f}\restriction_\Omega$ is one to one. Though our proofs can often be turned into algorithms for the recovery of $f$ we do not concern ourselves with matters of numerical stability or efficiency.
	\end{itemize}
	
We deal with two problems in this paper.
\begin{enumerate}
\item {\bf Intervals in $\TT$.}

Our function $f$ is the \textit{indicator} function of the union of at most $N$ open intervals on $\TT$. Courtney \cite{courtney2010unions} has shown that such a function is determined by its Fourier coefficients at the locations $0, 1, \ldots, N$. We give a new proof of this fact which is completely elementary (Courtney's proof uses Blaschke products and conformal mapping). We further discuss the problem of whether functions of this class are determined by different sets of Fourier coefficients.

We need to emphasize here that, apart from Courtney \cite{courtney2010unions} this problem has not been considered elsewhere. There are papers \cite{vetterli2002sampling} where one recovers a function on $\TT$ which is piecewise constant (or even piecewise a polynomial) with the correct number of samples (roughly equal to the number of degrees of freedom) but all these methods fail to take into account the fact that the function only takes two values (0 or 1) and will accordingly use a number of samples that is larger than the minimum by at least ${\rm const.} N$ samples. The ``extreme'' nonlinearity of this problem (not only in allowing variable nodes in the decomposition of $\TT$, but also in the values of the function) does not seem to make it amenable to the usual methods such as Prony's method, if one wants to use the minimal number of samples (or close to the minimal).

This we do in \S\ref{sec:intervals}.

\item {\bf Point masses in $\TT^d$.}

We examine the class of measures which are sums of at most $N$ complex point masses on $\TT^d$. Using the corresponding question in dimension 1 (solvable with the so-called Prony's method) we show an explicit set of locations $\Omega$ such that the Fourier coefficients on $\Omega$ determine the measure. This set $\Omega$ is of size $O(N \log^{d-1} N)$. We believe it is the first such set given of size $o(N^2)$, though several other methods have been described for this problem under additional assumptions on the locations of the point masses \cite{plonka2013many,maravic2004exact,kunis2016multivariate,diederichs2015parameter,potts2013parameter,cuyt2018multivariate,diederichs2017projection,sauer2017prony, sauer2018prony}. We emphasize that the set $\Omega$ depends only on $N$ and is not determined on the fly by looking at the Fourier coefficients of the measure.

This we show in \S\ref{sec:2d} where we also give a set $\Omega$ of size $O(k^{d-1} N)$ when we assume that the set of point masses has at most $k$ points with the same $x$-coordinate.

In \S\ref{sec:2d} we also describe a general connection of this problem with the problem of interpolation.
\end{enumerate}

	\section{At most $N$ intervals on $\TT$}\label{sec:intervals}
	
	\subsection{Determination from the Fourier coefficients at $0, 1, \ldots, N$}\label{sec:courtney}
	
	We consider sets $E \subset \TT$ of the form
	$$
	E = \bigcup_{j=1}^k (a_j, b_j)
	$$
	where $k \le N$ and the open intervals $(a_j, b_j)$ are disjoint.
	We show that $f = \one_E$ is determined by the complex data
	$$
	\ft{f}(0), \ft{f}(1),\ldots,\ft{f}(N).
	$$
	The family has $\le 2N$ real degrees of freedom and the data has $2N+1$, since $\ft{f}(0)$ is always real.
	
	Several similar problems with functions supported on intervals are treated in \cite{plonka2013many}.

	\begin{theorem}\label{th:intervals}
		Suppose that the sets $E,E' \subseteq \TT$ are both unions of at most $N$
		open arcs and that $\ft{\chi_E}(\nu) = \ft{\chi_{E'}}(\nu)$ for
		$\nu=0,1,\ldots,N$.
		Then $E = E'$. 
	\end{theorem}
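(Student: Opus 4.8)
The plan is to pass to the difference $g = \chi_E - \chi_{E'}$ and show it vanishes identically. By hypothesis $\ft g(\nu) = \ft{\chi_E}(\nu) - \ft{\chi_{E'}}(\nu) = 0$ for $\nu = 0, 1, \ldots, N$; since $g$ is real-valued we also have $\ft g(-\nu) = \overline{\ft g(\nu)} = 0$, so in fact $\ft g(\nu) = 0$ for all $|\nu| \le N$. The function $g$ is a step function valued in $\{-1,0,1\}$, equal to $+1$ on $E \setminus E'$, to $-1$ on $E' \setminus E$, and to $0$ elsewhere. I want a contradiction from $g \not\equiv 0$ by playing off two facts: $g$ oscillates slowly (few sign changes because it is built from few arcs) against the fact that $g$ carries no low frequencies (which forces fast oscillation).

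First I would establish the analytic half: \emph{if $g \in L^1(\TT)$ is real, $g \not\equiv 0$, and $\ft g(\nu) = 0$ for all $|\nu| \le N$, then $g$ changes sign more than $2N$ times}. Suppose $g$ has only $2s$ sign changes (on the circle the number is even), located in gaps $\tau_1, \ldots, \tau_{2s}$. Form the real trigonometric polynomial $p(x) = \prod_{j=1}^{2s} \sin \pp{\pi(x - \tau_j)}$, which is $1$-periodic, has degree $s$, and changes sign exactly at the $\tau_j$. Choosing the global sign of $p$ so that $gp \ge 0$ everywhere, we get $gp \not\equiv 0$, hence $\int_\TT gp \neq 0$. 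On the other hand $\int_\TT gp = \sum_{|\nu| \le s} \ft g(\nu)\,\ft p(-\nu)$, and when $s \le N$ every term vanishes since $\ft g(\nu) = 0$ there; this contradiction forces $s \ge N+1$.

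Next comes the combinatorial half: \emph{$g$ has at most $2N$ sign changes}. The number of sign changes of $g$ is at most the number of arcs making up $\Set{g \ne 0} = E \triangle E'$. The essential boundary of $E \triangle E'$ is contained in the endpoints of $E$ together with those of $E'$, a set of at most $2k + 2k' \le 4N$ points, and crossing any one non-coincident such point toggles membership in $E \triangle E'$; hence the arcs of $E \triangle E'$ and of its complement alternate around $\TT$, so $E \triangle E'$ has at most $4N/2 = 2N$ components. Thus $g$ has at most $2N$ sign changes, i.e.\ $s \le N$.

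Combining the two halves, $g \not\equiv 0$ would force $s \ge N+1$ by the analytic lemma, contradicting the combinatorial bound $s \le N$. Hence $g \equiv 0$, so $\chi_E = \chi_{E'}$ almost everywhere, which for finite unions of open arcs gives $E = E'$. I expect the main obstacle to be making the sign-change lemma fully rigorous for a \emph{discontinuous} $g$: one must place the zeros $\tau_j$ of $p$ inside the open gaps where $g$ passes between a positive and a negative arc, and verify that the single polynomial $p$ matches the sign of $g$ on every nonzero arc simultaneously all the way around the circle. The coincident-endpoint cases (an endpoint of $E$ equal to one of $E'$) only lower the boundary count, so they do not threaten the bound $s \le N$, but they should be checked separately.
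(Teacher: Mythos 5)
Your argument is correct in substance and takes a genuinely different route from the paper's. The paper differentiates the indicators to turn the hypothesis into equality of power sums of the unimodular endpoint data $z_j, w_j, z_j', w_j'$, converts power sums into elementary symmetric functions via Newton--Girard, recovers the symmetric functions of order above $N$ from the unit-circle identity \eqref{connect}, and concludes that the endpoint multisets $\Set{z_j,w_j'}$ and $\Set{z_j',w_j}$ coincide; it thus actually reconstructs the endpoints, and the same symmetric-function formalism is what powers the paper's later counterexamples showing that coefficients $1,\ldots,2N-1$ do not suffice. You instead run a Sturm--Hurwitz oscillation argument on $g=\chi_E-\chi_{E'}$: realness upgrades the data to $\ft{g}\equiv 0$ on $\Set{-N,\ldots,N}$ (this conjugate symmetry plays exactly the role that \eqref{connect} plays in the paper), the product-of-sines polynomial of degree $s$ forces a nonzero $g$ to have more than $2N$ sign changes, and the arc structure caps the sign changes at $2N$. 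Your route is a pure uniqueness argument rather than a reconstruction, but it isolates a cleaner general principle: it applies verbatim to any real step function with at most $2N$ sign changes, not only to a difference of two indicators.

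The one step that needs repair is the combinatorial count, and your own diagnosis of the danger is slightly off. At a shared endpoint where $g$ jumps directly from $+1$ to $-1$ (say $(a,c)$ is an arc of $E$ and $(c,b)$ an arc of $E'$), crossing $c$ does \emph{not} toggle a.e.-membership in $E\triangle E'$, so the toggling/alternation count does not register this sign change at all; such coincidences do not merely ``lower the boundary count'', they produce sign changes invisible to your bookkeeping. A clean fix: cyclically, $(-\to+)$ and $(+\to-)$ sign changes alternate, so there are $s$ of each; every $(-\to+)$ change has its positive region inside some arc of $E$ and its preceding negative region outside $E$, and no arc of $E$ can host two such changes, because a negative region cannot occur inside an arc of $E$ (there $g=1-\chi_{E'}\ge 0$); hence $s\le N$ by counting arcs of $E$, and symmetrically the $(+\to-)$ changes are bounded by the arcs of $E'$. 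With the $\tau_j$ allowed to sit exactly at the direct jump points, $p$ vanishes there but only at finitely many points, so $\int gp>0$ still holds. Finally, your closing step ``$\chi_E=\chi_{E'}$ a.e.\ $\Rightarrow E=E'$'' requires the arcs to be maximal (pairwise disjoint closures): $E=(0,\tfrac12)\cup(\tfrac12,1)$ and $E'=(0,1)$ agree a.e.\ but are unequal. This, however, is the same implicit convention the paper invokes when it asserts $\Set{z_j}\cap\Set{w_j}=\emptyset$, so both proofs share the caveat.
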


	\begin{proof}
		For $x=(x_1,\ldots,x_n)$ let $\sigma_k(x)$ denote the $k$-th elementary
		symmetric function of
		the variables $x_i$ and let $s_k(x) = \sum_{j=1}^n x_j^k$ denote the $k$-th
		power sum of the $x_i$.
		
		We use the Newton-Girard formulas
		\begin{equation}\label{NG}
			k \cdot \sigma_k(x) = \sum_{i=1}^k (-1)^{i-1} \sigma_{k-i}(x) s_i(x),\ \ (k \ge 1).
		\end{equation}
		Note that $\sigma_0(x)=1$.
		What is important about these formulas is that if we know $s_1,\ldots,s_\nu$
		then we know also the numbers $\sigma_1,\ldots,\sigma_\nu$, for all $\nu\ge1$.
		The precise dependence is irrelevant for our purposes.

		Suppose $N$ is given and that the sets $E = \bigcup_{j=1}^n I_j$ and $E' = \bigcup_{j=1}^{n'} I_j'$ (with $n, n' \le N$ and the $I_j, I_j'$ being arcs) have the same Fourier coefficients of order up to $N$:
		$$
		\ft{\chi_E}(\nu) = \ft{\chi_{E'}}(\nu),\ \ \nu=0,1,2,\ldots,N.
		$$
		If $I_j=(a_j, b_j)$ and $I_j'=(a_j', b_j')$ then, differentiating the functions
		$\chi_E, \chi_{E'}$, we obtain
		that the measures
		$$
		\mu = \sum_{j=1}^n \delta_{a_j} - \delta_{b_j},\ \ 
		\mu' = \sum_{j=1}^{n'} \delta_{a_j'} - \delta_{b_j'},
		$$
		have the same Fourier coefficients of order up to $N$. Writing $z_j = e^{-2\pi i a_j}, w_j = e^{-2\pi i b_j},
		z_j' = e^{-2\pi i a_j'}, w_j' = e^{-2\pi i b_j'}$ we obtain the relations 
		$$
		\sum_{j=1}^n z_j^\nu - w_j^\nu = \sum_{j=1}^{n'} z_j'^{\nu}-w_j'^{\nu},\ \ \ \nu=1,2,\ldots,N.
		$$
		From this we get $s_\nu(z,w') = s_\nu(z',w)$, for $\nu=1,2,\ldots,N$, where
		$$
		s_\nu(z,w') := \sum_{j=1}^n z_j^\nu + \sum_{j=1}^{n'} w_j'^\nu,\mbox{\ and\ }
		s_\nu(z',w) = \sum_{j=1}^{n'}z_j'^{\nu} + \sum_{j=1}^n w_j^\nu.
		$$
		By the Newton-Girard formulas the numbers $s_1,\ldots,s_N$ determine the numbers $\sigma_1,\ldots,\sigma_N$,
		hence we have
		\begin{equation}\label{sigmaeq}
			\sigma_\nu(z,w') = \sigma_\nu(z',w),\ \ \ \nu=1,2,\ldots,N.
		\end{equation}
		Write $M=n+n'\le 2N$ and observe that
		$$
		\sigma_M(z,w') = \sigma_M(z',w),
		$$
		i.e.,
		$\prod z_j w_j^{-1} = \prod z_j' w_j'^{-1}$. This comes from
		the fact that the total length of $E$ and $E'$ is the same, as testified by
		$\ft{\chi_E}(0) = \ft{\chi_{E'}}(0)$.
		
		We now use the fact that $\Abs{z_j}=\Abs{w_j}=\Abs{z_j'}=\Abs{w_j'}=1$:
		\begin{equation}\label{connect}
			\overline{\sigma_k(z,w')} = \sigma_k\left(\frac{1}{z},\frac{1}{w'}\right) =
			\frac{\sigma_{M-k}(z,w')}{\sigma_M(z,w')},\ \ \ k=0,1,\ldots,M,
		\end{equation}
		and similarly for the elementary symmetric functions of the vector $(z', w)$.
		For $k=1,2,\ldots,M-N-1\le N$ we obtain from \eqref{connect} the missing values
		of $\sigma_\nu$ for $\nu=N+1,\ldots,M-1$.
		
		We have proved that
		$$
		\sigma_\nu(z,w') = \sigma_\nu(z',w),\ \ \ \nu=0,1,2,\ldots,M,
		$$
		hence the multisets $\Set{z_j, w_j'}$ and $\Set{z_j', w_j}$ are equal,
		since the elementary symmetric functions determine the polynomials $p$ and
		$q$ with roots at $\Set{z,w'}$ and $\Set{z',w}$ respectively and they are equal.
		But $\Set{z_j} \cap \Set{w_j} = \Set{z_j'} \cap \Set{w_j'} = \emptyset$ so
		the only possibility is that $\Set{z_j} = \Set{z_j'}$ and $\Set{w_j} =
		\Set{w_j'}$, as we had to show.
		
	\end{proof}
	
	\begin{remark}
	Of course it is also possible to solve this problem using Prony's method, which we will introduce in Section \ref{sectonetorus}. However, Prony's method will require more samples, as it cannot exploit the fact that the coefficients of $\hat{\chi_E}$ alternate between plus and minus one.
	\end{remark}
	
	\subsection{Determination from other sets of Fourier coefficients}
	
	The problem is sensitive to the choice of which Fourier coefficients to use in order to determine the set, even in the case of one interval $E=(a,b)$. In this case, we have 
	\begin{equation}\label{onedim}
		2\pi i\nu\ft{\chi_E}(\nu)=e^{-2\pi i\nu a}-e^{-2\pi i\nu b}:=z^{\nu}-w^{\nu},\; \nu\neq 0.
	\end{equation}
	(Here, again, $z = e^{-2\pi i a}$, $w = e^{-2\pi i b}$.)

	A single Fourier coefficient is not enough to determine the interval uniquely: for $\nu=0$ this is obvious as $\ft{\chi_E}(0)=b-a$. For $\nu=1$, consider $E'=(a',b')$ such that $z'=-w$, $w'=-z$ (but this is the only other option). Last, for $\nu\geq 2$, we may take $E=(0, \frac{1}{\nu})$ and $E'=(a', a'+\frac{1}{\nu})$, for some $a'\neq 0$.
	
	For two coefficients, apart from the case when  $\ft{\chi_E}(0), \ft{\chi_E}(1)$ are known, thus defining $E$ uniquely, it is also easy to see that knowing $\ft{\chi_E}(1), \ft{\chi_E}(2)$ also determines $E$. No other combination of two Fourier coefficients
	$\ft{\chi_E}(m), \ft{\chi_E}(n)$
	determines the set (we skip the details).
	
	For $N\geq 2$, let us point out that even equality of all Fourier coefficients of two sets
	$$
	E=\bigcup_{j=1}^N (a_j, b_j),\ \ \ E'=\bigcup_{j=1}^N(a_j',b_j')
	$$
	at $1, 2, \ldots 2N-1$, is not enough to conclude $E=E'$, while, by Theorem \ref{th:intervals}, the Fourier coefficients from 0 to $N$ suffice.
	
	To prove the claim, let us begin with a simple observation. Let $x=(x_1, \ldots, x_{2N})$. Then, 
	\begin{equation}\label{vanishing}
		s_1(x)=s_2(x)=\ldots s_{2N-1}(x)=0 \iff \sigma_{1}(x)=\sigma_2(x)=\ldots \sigma_{2N-1}(x)=0.
	\end{equation}
	(We keep the notation for power sums and elementary symmetric functions that was used in the proof of Theorem \ref{th:intervals}.)
	The fact that vanishing power sums give vanishing elementary symmetric functions follows directly from the Newton-Girard formulas \eqref{NG}. For the converse, observe that always $s_1(x)= \sigma_{1}(x)$ and then apply (\ref{NG}) consecutively for $k=2, \dots, 2N-1$ (or observe that the power sums are themselves symmetric functions, hence they can be expressed via the elementary symmetric functions).
	
	Fix some $\theta\in(0, \pi/N)$ and consider two regular $2N$-gons $\mathcal{P}$ and $\mathcal{Q}$ on the unit circle, with vertices arranged counterclockwise, defined by
	$$
	V_{\mathcal{P}} = \Set{z_1, \ldots, z_N, w_1',\ldots, w_N'}
	$$
	and
	$$
	V_{\mathcal{Q}} = \Set{w_1,\ldots, w_N, z_2', \ldots, z_N', z_1'}
	$$
	(see Fig.\ \ref{fig:2-polygons}, where $V_{\mathcal P}$ is blue and $V_{\mathcal Q}$ is red) where
	$$
	z_1 = 1,\ w_1 = e^{i\theta}.
	$$
	\begin{figure}[h]
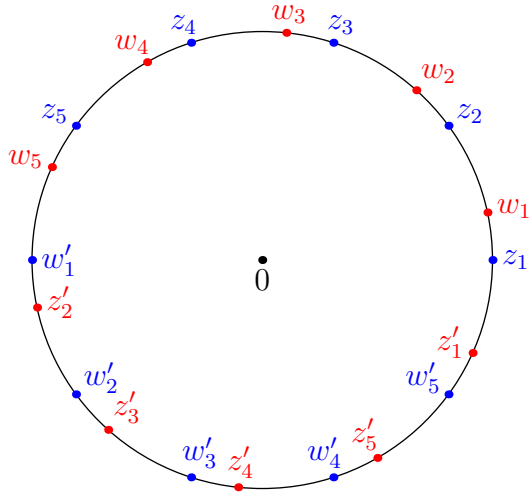

		\begin{center}
			%
			%
			%
			%
				%
			\asyinclude{circle.asy}
		\end{center}
		\caption{One possible selection of arcs $\Ds\bigcup_{j=1}^N(z_j, w_j)$ and $\Ds\bigcup_{j=1}^N(z_j', w_j')$ with the same Fourier coefficients of order $1, 2, \ldots, 2N-1$ (shown for $N=5$).}\label{fig:2-polygons}
	\end{figure}
	It follows that the numbers in $V_{\mathcal P}$ are the roots of the polynomial $z^{2N}-1$ and that the numbers in $V_{\mathcal Q}$ are the roots of the polynomial $z^{2N}-e^{i2N\theta}$. Since the elementary symmetric functions of the roots of a polynomial are the coefficients of the polynomial, it follows that the numbers $\sigma_{\nu}(z,w')$, $\sigma_{\nu}(z',w)$ vanish for $\nu=1, \dots, 2N-1$. By (\ref{vanishing}), we also have $s_{\nu}(z,w')=s_{\nu}(z',w) = 0$ for $\nu=1, \dots, 2N-1$. Then, we have $\ft{\chi_E}(\nu) = \ft{\chi_{E'}}(\nu)$ for all $\nu=1, \dots, 2N-1$, however the sets $E$, $E'$ do not coincide, as implied by the arrangement of $V_{\mathcal{P}}$, $V_{\mathcal{Q}}$.
	
	It is interesting to see that many more examples are possible with the points $z_1, \ldots, z_N, w_1', \ldots, w_N'$ located at the vertices of a regular $2N$-gon and the points $z_1', \ldots, z_N', w_1, \ldots, w_N$ located at the vertices of a rotated regular $2N$-gon, but not necessarily in the order shown in Fig.\ \ref{fig:2-polygons}. As explained above these locations guarantee that the two sets $\bigcup_{j=1}^N(z_j, w_j)$ and $\bigcup_{j=1}^N(z_j', w_j')$ (with an obvious and excusable abuse of notation) have the same Fourier coefficients of order $1, 2, \ldots, 2N-1$ and they are of course not equal.
	
	One needs to find the arrangements of the points $z_j, w_j, z_j', w_j'$ on the vertices of these two polygons so that the following rules are satisfied:
	\begin{enumerate}
		\item
		The points $z_1, \ldots, z_N, w_1, \ldots, w_N$ appear on the circle in the counterclockwise order. Same for the points $z_1', \ldots, z_N', w_1', \ldots, w_N'$. This rule ensures that the arcs $(z_i, w_i)$ are non-overlapping and the same for the arcs $(z_i', w_i')$.
		\item
		The $z_j$ and $w_j'$ are blue (polygon ${\mathcal P}$) and the $z_j'$ and $w_j$ are red (polygon ${\mathcal Q}$).
	\end{enumerate}
	We can enumerate these arrangements by viewing this problem as a variant of the so-called {\it Terquem's problem} (see, e.g., \cite[Problem 30 on p.\ 120, and solution on p.\ 170]{stanley2011enumerative}). Terquem's problem asks in how many ways we can choose a sequence $a_1 < a_2 < \ldots < a_k$ from the set $\Set{1, 2, \ldots, n}$ whose elements alternate between odd and even. By viewing blue as odd and red as even on our polygons we see that we can restate our problem as follows:
	\begin{quotation}
		In how many ways can we select an alternating sequence $a_1 < a_2 < \ldots < a_{2N}$ from the set $\Set{1, 2, \ldots, 4N}$ such that its complement is also alternating.
	\end{quotation}
	The numbers $a_1, \ldots, a_{2N}$ correspond to the choices for the labels $z_1, w_1, \ldots, z_N, w_N$ and the complementary set corresponds to the labels $z_1', w_1', \ldots, z_N', w_N'$.
	
	Following \cite[p.\ 170]{stanley2011enumerative} we can encode the sequence $a_i$ via the sequence $b_i$ defined by
	$$
	b_i = a_i - i + 1.
	$$
	This sequence is increasing
	$$
	b_1 \le b_2 \le \ldots \le b_{2N}
	$$
	and gives back the sequence $a_i$ as $a_i = b_i + i -1$ (which is strictly increasing).
	It also satisfies the bounds
	$$
	1 \le b_i \le 2N+1.
	$$
	The alternating property of the sequence $a_i$ translates exactly to the $b_i$ being all odd. If we did not care about the complement of the sequence $a_i$ being also alternating then, as explained in \cite{stanley2011enumerative}, all we would have to do is select {\it with replacement} the $2N$ numbers $b_i$ among the odd numbers of the set $\Set{1, 2, \ldots, 2N+1}$, that is from $N+1$ numbers. To ensure that the complement is also an alternating sequence it is necessary and sufficient to ensure that the intervals (consecutive values) defined by the sequence $a_i$ are all of {\it even} length. But an interval of the $a_i$ translates into an interval of constancy for the corresponding $b_i$. Summarizing, the $b_i$ must be odd and be selected an even number of times each. To achieve this we select with replacement $N$ numbers from the odd numbers of the set $\Set{1, 2, \ldots, 2N+1}$ and then double the number of times each selection appears. This enumerates the $b_i$ and therefore also the $a_i$. We omit the details.

	\section{Point masses on $\TT^d$}\label{sec:2d}

	\subsection{Point masses on $\TT$}\label{sectonetorus}
	The one-dimensional problem has a very long history, going back to Gaspard de Prony's work \cite{de1795essai} from 1795. Since then, many solutions have been proposed, like Pisarenko's method \cite{pisarenko1973retrieval}, MUSIC \cite{schmidt1986multiple} or ESPRIT \cite{roy1989esprit}. Still, there is ongoing research on further improvements, see \cite{derevianko2021esprit} for a recent approach.
	
	We show here another approach from the Electrical Engineering literature (see e.g.\ \cite{vetterli2002sampling}) with so-called annihilation filters. We will make use of Theorem \ref{onetorus} repeatedly when solving the same problem on $\TT^d$.
	
	\begin{theorem}\label{onetorus}
		Suppose $\mu$ is a measure on $\TT$ which is a sum of at most $N$ complex point masses.
		Then $\mu$ is determined by the data
		\beql{prony-data}
		\ft{\mu}(j),\ \ \ j=-N+1, -N+2, \ldots, N.
		\eeq
	\end{theorem}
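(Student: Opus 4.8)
The plan is to prove the stated determination as an injectivity statement: rather than reconstruct $\mu$ explicitly, I would show that no two distinct measures of this class can share the listed Fourier coefficients. So suppose $\mu$ and $\mu'$ are both sums of at most $N$ complex point masses with $\ft{\mu}(j) = \ft{\mu'}(j)$ for $j = -N, \ldots, N$, and set $\nu = \mu - \mu'$. After combining any masses that happen to sit at a common point, $\nu$ is a sum of point masses at a set of at most $2N$ distinct locations, say $\nu = \sum_{k=1}^L d_k \delta_{y_k}$ with $L \le 2N$ and the $y_k \in \TT$ pairwise distinct; by hypothesis $\ft{\nu}(j) = 0$ for the $2N+1$ consecutive integers $j = -N, \ldots, N$. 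The goal is then to conclude $\nu = 0$, which is exactly $\mu = \mu'$.

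Writing $\zeta_k = e^{-2\pi i y_k}$, which are distinct points on the unit circle, the vanishing of the Fourier coefficients reads $\sum_{k=1}^L d_k \zeta_k^j = 0$ for $j = -N, \ldots, N$. Reindexing by $m = j + N$ and absorbing the factor $\zeta_k^{-N}$ into the coefficient (set $e_k := d_k \zeta_k^{-N}$), this turns into the homogeneous system $\sum_{k=1}^L e_k \zeta_k^m = 0$ for $m = 0, 1, \ldots, 2N$. Since $L \le 2N$, there are at least $L$ of these equations, and I may restrict attention to the first $L$ of them, $m = 0, 1, \ldots, L-1$.

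Those $L$ equations form a square linear system whose coefficient matrix is the Vandermonde matrix $(\zeta_k^m)$ with rows indexed by $m=0,\ldots,L-1$ and columns by $k=1,\ldots,L$. Because the $\zeta_k$ are pairwise distinct, its determinant $\prod_{1 \le k < \ell \le L}(\zeta_\ell - \zeta_k)$ is nonzero, so the only solution is $e_1 = \cdots = e_L = 0$, whence all $d_k = 0$ and $\nu = 0$. This delivers $\mu = \mu'$ and proves the determination.

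The argument is short because the real content --- that an exponential sum $\sum_k c_k z_k^j$ with at most $N$ frequencies $z_k$ on the circle is pinned down by $2N+1$ consecutive samples --- is encoded in the invertibility of a Vandermonde matrix once one passes to the difference $\nu$ and counts degrees of freedom. The only place requiring care, and the step I expect to be the main obstacle, is the bookkeeping that guarantees $\nu$ genuinely has at most $2N$ distinct atoms (so that $L \le 2N$ and the $2N+1$ available equations suffice): the supports of $\mu$ and $\mu'$ may overlap, and after cancellation some coefficients can vanish, so one must pass to the true support of $\nu$ before invoking the Vandermonde count. I would also remark that this uniqueness is precisely what underlies Prony's constructive method: there the frequencies $\zeta_k$ appear as the roots of a ``Prony polynomial'' whose coefficients solve a Hankel system built from the data \eqref{prony-data}, after which the amplitudes are recovered from a Vandermonde system --- but for mere determination the difference argument above is all that is needed.
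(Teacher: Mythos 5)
Your proof is correct, and it takes a genuinely different route from the paper's. The paper proves Theorem \ref{onetorus} by Prony's method: it forms the annihilating polynomial $a(z)=\prod_j(z-\rho_j)$ of the unknown measure, shows that the degree bound together with the vanishing of the convolution $\ft{a}*\ft{\mu}(n)$ on a suitable range of $n$ determines $a$ up to constant multiples (hence determines the support), and only then recovers the amplitudes from a Vandermonde system. You never construct a polynomial: you difference the two measures, observe that $\nu=\mu-\mu'$ has at most $2N$ distinct atoms while $\ft{\nu}$ vanishes at $2N+1$ consecutive integers, and conclude $\nu=0$ from a single Vandermonde determinant. Your differencing step is exactly the device the paper itself uses later, in Lemma \ref{lm:fix-the-locations}, for the two-dimensional problem, so both ingredients of your argument appear in the paper --- just not combined this way in the proof of this theorem. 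As for what each approach buys: the paper's proof doubles as a reconstruction algorithm (solve a linear system for the Prony polynomial, find its roots, then solve for the amplitudes), which fits the paper's constructive viewpoint; your argument is shorter, proves exactly the injectivity that the statement asserts, and sidesteps the bookkeeping of which convolution values $\ft{a_1}*\ft{\mu_2}(n)$ are computable from the window $[-N,N]$ --- a genuinely delicate point in the paper's write-up, since for $n$ near $-N$ that convolution involves coefficients of $\mu_2$ at indices below $-N$, outside the agreement window \eqref{agreement}, so the claimed vanishing for all $n=-N,\ldots,-1$ needs its range adjusted. Your argument even yields slightly more than the statement: since you only use $L\le 2N$ of the $2N+1$ equations, any $2N$ consecutive coefficients (e.g.\ $j=-N,\ldots,N-1$) already suffice for uniqueness. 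Finally, the point you flagged as the main obstacle is handled correctly: whether or not some coefficients of $\nu$ vanish after cancellation, the Vandermonde system forces all $d_k=0$, which is $\nu=0$.
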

	
	\begin{proof}
		Suppose that $\mu=\sum_{j=1}^K c_j \delta_{\theta_j}$, with $c_j \in \CC\setminus\Set{0}$, the
		$\theta_j$ all different and $K \le N$.
		It follows that
		\beql{lin-comb}
		\ft{\mu}(n) = \sum_{j=1}^K c_j \rho_j^{-n},\ \ \ (\rho_j=e^{2\pi i \theta_j},\ n \in \ZZ).
		\eeq
		Define the polynomial
		$$
		a(z) = \prod_{j=1}^K (z-\rho_j) = \ft{a}(0) + \ft{a}(1)z + \ldots + \ft{a}(K-1)z^{K-1}+z^K,
		$$
		and note that $\ft{a}(n)$ also denotes its Fourier coefficients when viewed as a function on $\TT$.
		Since $a\mu = 0$ it follows that
		\beql{conv-infinite}
		\ft{a} * \ft{\mu}(n) = 0,\ \ \ (n \in \ZZ).
		\eeq
		We now show that the polynomial $a(z)$ is determined up to constant multiples by the conditions
		\begin{eqnarray}
			\deg{a} &\le& K \label{degree}\\
			\ft{a} * \ft{\mu}(n) &=& 0,\ \ \ (n \in [1, K]).\label{conv}
		\end{eqnarray}
		It is enough to show that \eqref{degree} and \eqref{conv} together imply 
		that $a(z)$ vanishes on the $\rho_j$ (this is the same as \eqref{conv-infinite}),
		whose number is $K$, the same as the degree of $a$.
		
		Substituting \eqref{lin-comb} in \eqref{conv} we get for $l=1,2,\ldots,K$
		\begin{eqnarray*}
			0 &=& \sum_{k=0}^K \ft{a}(k) \ft{\mu}(l-k),\\
			&=& \sum_{k=0}^K \ft{a}(k) \sum_{j=1}^K c_j \rho_j^{k-l},\\
			&=& \sum_{j=1}^K c_j \rho_j^{-l} \sum_{k=0}^K \ft{a}(k) \rho_j^k,\\
			&=& \sum_{j=1}^K c_j \rho_j^{-l} a(\rho_j). 
		\end{eqnarray*}
		Observe that the $K\times K$ Vandermonde matrix $\rho_j^{-\nu}$,
		$j,\nu=1,\ldots,K$, is nonsingular and,
		therefore, all $c_j a(\rho_j)$ are 0, $j=1,2,\ldots,K$. Since all $c_j$ are nonzero this implies that
		$$
		a(\rho_j)=0,\ \ \ j=1,2,\ldots,K.
		$$
		
		Suppose $\mu_1, \mu_2$ are two measures with the same Fourier data \eqref{prony-data}:
		$$
		\mu_1 = \sum_{j=1}^{K_1} c_{1,j} \delta_{\theta_{1,j}},\ \ 
		\mu_2 = \sum_{j=1}^{K_2} c_{2,j} \delta_{\theta_{2,j}},
		\ K_1 \le K_2 \le N,
		$$
		and
		\beql{agreement}
		\ft{\mu_1}(n) = \ft{\mu_2}(n),\ \ (n=-N+1,\ldots,N-1,N).
		\eeq
		We are assuming that all $\theta_{1,j}$ are distinct and so are all $\theta_{2,j}$, and
		that $c_{1,j}, c_{2,j} \in \CC\setminus\Set{0}$.
		Write also $\rho_{1,j}=e^{2\pi i \theta_{1,j}}$ and
		$\rho_{2,j}=e^{2\pi i \theta_{2,j}}$.
		
		Write
		$$
		a_1(z) = \prod_{j=1}^{K_1} (z-\rho_{1,j}),\ \ a_2(z) = \prod_{j=1}^{K_2} (z-\rho_{2,j}).
		$$
		We have $\ft{a_1}*\ft{\mu_1}(n) = \ft{a_2}*\ft{\mu_2}(n) = 0$ for all $n$ but we also
		have $\ft{a_1}*\ft{\mu_2}(n) = 0$ for $n=1,\ldots,N$, because of \eqref{agreement}.
		Applying the fact that $a(z)$ is determined by \eqref{degree} and \eqref{conv} with $K=K_2$ we obtain
		that $a_1(z)=a_2(z)$ hence $K_1=K_2$ and $\Set{\rho_{1,j}} = \Set{\rho_{2,j}}$.
		
		It remains to show that the linear map
		$$
		(c_1,\ldots,c_K) \to (\ft{\mu}(1),\ldots,\ft{\mu}(K))
		$$
		is injective. This map is given by \eqref{lin-comb} and it is easily seen to be nonsingular as its
		determinant is a multiple of the Vandermonde determinant.
	\end{proof}
	
	\begin{remark}
	The number of samples is sharp, as we recover the $2N$ parameters using $2N$ samples. However, note that we use complex samples, to recover $N$ complex parameters and $N$ parameters in $\TT$. One can easily check that the proof extends to the case of point measures on $\TT + i\RR$, where \eqref{lin-comb} becomes
	$$
			\ft{\mu}(n) = \sum_{j=1}^K c_j \rho_j^{-n},\ \ \ (\rho_j=e^{2\pi i (\theta_j+i\xi_j)},\ n \in \ZZ).
	$$
	In case of real coefficients and point measures on $\TT$ one can utilize $\overline{\hat \mu(-k)} = \hat \mu(k)$ to use only $\hat\mu(k),~k=0,\ldots, N$. This observation is an important part of the unitary ESPRIT algorithm \cite{haardt1995unitary}, which uses only real-valued computation to solve the problem.
	\end{remark}
	
	\subsection{Connections to interpolation}\label{sec:interpolation}
	
	Suppose $\mu = \sum_{j=1}^N c_j \delta_{u_j}$, where $u_j =(u_{j1},\ldots,u_{jd}) \in \TT^d$ are distinct points, all $c_j$ are non-zero and $d>1$.
	Can we recover $\mu$ from a number of Fourier coefficients that is close to the number of degrees of
	freedom (in this case $(d+1)N$ or $(d+2)N$ depending on whether $c_j \in \RR$ or $c_j \in \CC$)?
	
	Here the existing results do not seem to be final. In the special case where all $u_{j1}$ are different (or equivalently all $u_{jk}$ for a fixed $k=1,\ldots,d$) the problem is solved using the one-dimensional theory with $O(dN)$ Fourier coefficients, namely by using the Fourier coefficients at the locations $(m, \epsilon_2,\dots,\epsilon_d)$ with $m=0, 1, \ldots, N$ and $\epsilon_j\in\Set{0,1}$, assuming all $u_{j1}$ are different (we generalize this in Theorem \ref{th:max-k}).
	In the general case and without imposing any restrictions on the locations $u_j$, it has only been known until this work how to recover $\mu$ using $O(N^2)$ Fourier coefficients \cite[Section \MakeUppercase{\romannumeral 3}.C]{maravic2004exact} in the two dimensional case and using $O(N^2\log^{2d-2}N)$ coefficients in the general case \cite{sauer2018prony}.
	
	If one allows for the collection of Fourier coefficients used to depend on the data then one can recover the parameters with $O(N)$ Fourier coefficients, see \cite{plonka2013many} for the case $d=2$ and \cite{cuyt2018multivariate} for arbitrary $d$. It was conjectured in \cite{plonka2013many} that recovery of the parameters in this problem with $d=2$ is always possible with $O(N)$ Fourier coefficients which do not depend on the data and are on four predetermined lines. This conjecture was disproved in \cite{diederichs2015parameter} but the possibility remains that some more general set of $O(N)$ Fourier coefficients suffices. (In Theorem \ref{th:NlogN} we show that $O(N \log N)$ Fourier coefficients suffice.) For the general $d$ dimensional case it was shown in \cite{griesmaier2017multifrequency} that taking a total of $O(N^2)$ samples on scattered line allows for a reconstruction.
	
	\begin{definition}
		Suppose $\Omega \subseteq \ZZ^d$ and $k=1,2,3,\ldots$. We call $\Omega$
		\underline{$k$-interpolating} if
		whenever $u_1,\ldots,u_{\ell} \in \TT^d$, $\ell\le k$, are distinct and $d_1, \ldots, d_{\ell} \in \CC$ we can find coefficients
		$c_\omega$, $\omega\in\Omega$, such that
		$$
		d_j = \sum_{\omega\in\Omega} c_\omega e^{2\pi i \omega\cdot u_j},\ \ j=1,2,\ldots,\ell.
		$$
		We call $\Omega$ \underline{$k$-sufficient}
		if we can recover any measure $\mu = \sum_{j=1}^{\ell} c_j \delta_{u_j}$, $\ell\le k$,
		(with unknown $c_j\in\CC\setminus\Set{0}$, unknown and distinct $u_j\in\TT^d$) from its Fourier coefficients at $\Omega$
		$$
		\ft{\mu}(\omega) = \sum_{j=1}^\ell c_j e^{-2\pi i \omega\cdot u_j},\ \ \omega\in\Omega.
		$$
	\end{definition}
	The connection between the concepts of $k$-interpolation and $k$-sufficiency is the following.
	\begin{theorem}\label{th:connection}
		\beql{implications}
		\Omega\mbox{ is $(2N)$-interpolating } \Longrightarrow
		\Omega\mbox{ is $N$-sufficient } \Longrightarrow
		\Omega\mbox{ is $N$-interpolating.}
		\eeq
	\end{theorem}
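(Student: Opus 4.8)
The plan is to translate both properties into one statement about linear independence of exponential vectors and then read off the two implications from it. For distinct points $v_1,\ldots,v_m\in\TT^2$ consider the vectors $\pp{e^{2\pi i\omega\cdot v_j}}_{\omega\in\Omega}\in\CC^\Omega$, $j=1,\ldots,m$. Solvability of the interpolation problem for $v_1,\ldots,v_m$ means exactly that the linear map $(c_\omega)\mapsto\pp{\sum_{\omega}c_\omega e^{2\pi i\omega\cdot v_j}}_{j}$ is onto $\CC^m$, i.e.\ that these $m$ vectors are linearly independent (surjectivity $=$ full row rank). Since complex conjugation preserves linear independence, this is the same as independence of $\pp{e^{-2\pi i\omega\cdot v_j}}_\omega$, and also the same statement for the points $-v_j$; I will use this symmetry freely to dispose of the $e^{\pm}$ sign conventions. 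Thus $\Omega$ is $k$-interpolating precisely when every family of exponential vectors coming from at most $k$ distinct points is linearly independent.

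For the first implication, assume $\Omega$ is $(2N)$-interpolating and let $\mu_1,\mu_2$ each be a sum of at most $N$ point masses with $\ft{\mu_1}(\omega)=\ft{\mu_2}(\omega)$ for all $\omega\in\Omega$. Writing $\nu=\mu_1-\mu_2=\sum_{i=1}^m b_i\delta_{v_i}$, where $v_1,\ldots,v_m$ enumerate the distinct points of $\supp\mu_1\cup\supp\mu_2$, we have $m\le 2N$. The hypothesis says $\ft{\nu}(\omega)=\sum_i b_i e^{-2\pi i\omega\cdot v_i}=0$ for every $\omega\in\Omega$, i.e.\ $(b_i)$ annihilates the vectors $\pp{e^{-2\pi i\omega\cdot v_i}}_\omega$. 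Because $m\le 2N$ and $\Omega$ is $(2N)$-interpolating, the vectors $\pp{e^{2\pi i\omega\cdot v_i}}_\omega$—equivalently their conjugates $\pp{e^{-2\pi i\omega\cdot v_i}}_\omega$—are linearly independent, forcing every $b_i=0$. Hence $\nu=0$ and $\mu_1=\mu_2$, which is exactly $N$-sufficiency.

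For the second implication I argue by contraposition. If $\Omega$ is not $N$-interpolating, interpolation fails for some distinct $u_1,\ldots,u_\ell$ with $\ell\le N$, so the vectors $\pp{e^{2\pi i\omega\cdot u_j}}_\omega$ are linearly dependent: there are scalars $\lambda_1,\ldots,\lambda_\ell$, not all zero, with $\sum_j\lambda_j e^{2\pi i\omega\cdot u_j}=0$ for every $\omega\in\Omega$. Reading $e^{2\pi i\omega\cdot u_j}=e^{-2\pi i\omega\cdot(-u_j)}$ and discarding the vanishing $\lambda_j$, this says precisely that the nonzero measure $\nu=\sum_{\lambda_j\ne 0}\lambda_j\delta_{-u_j}$, which has at most $N$ distinct atoms, satisfies $\ft{\nu}(\omega)=0$ for all $\omega\in\Omega$. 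Then $\nu$ and the zero measure are two distinct elements of our class with identical Fourier data on $\Omega$, so $\Omega$ cannot be $N$-sufficient. (If one prefers to avoid the zero measure, note that a single atom is never annihilated by a nonempty $\Omega$, so $\supp\nu$ has at least two points and $\nu$ splits into two nonzero measures sharing their Fourier data.)

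The only real care needed is bookkeeping: matching the $e^{\pm}$ conventions between the interpolation map and the Fourier transform, and recognizing the duality that ``interpolation is solvable at $V$'' (surjectivity of the coefficient-to-values map) coincides with ``the exponential vectors indexed by $V$ are independent'' (triviality of the kernel of the sampling map). The conceptual heart—and the source of the factor $2$—is that the difference of two measures with at most $N$ atoms is a measure with at most $2N$ atoms, so controlling all $\le 2N$-point configurations by interpolation is exactly what enforces uniqueness in the $N$-atom recovery problem, while conversely any recovery failure is witnessed by an annihilated measure on $\le 2N$ atoms.
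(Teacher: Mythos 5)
Your proof is correct and follows essentially the same route as the paper's: the duality ``interpolation at a point set $=$ linear independence (full row rank) of the exponential vectors'' (the paper's Lemma \ref{lm:transpose}) combined with the observation that the difference of two measures with at most $N$ atoms is a measure with at most $2N$ atoms (the paper's Lemma \ref{lm:fix-the-locations}). If anything, your handling of the second implication is slightly more explicit than the paper's, since you spell out how a failure of interpolation produces two distinct measures of the class with identical data on $\Omega$ (including the zero-measure edge case), a step the paper leaves implicit when it asserts that $N$-sufficiency forces the matrix to have full rank.
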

	\begin{proof}
		To prove Theorem \ref{th:connection} let us first make the
		following remark, which says that if we can solve the problem of sufficiency
		with the locations fixed then
		we can also solve the problem with unknown (but fewer) locations.
		\begin{lemma}\label{lm:fix-the-locations}
			Suppose $\Omega \subseteq \ZZ^d$ is such that the mapping
			\beql{map}
			\mu \to (\ft{\mu}(\omega),\ \omega\in\Omega)
			\eeq
			is injective on the set
			\beql{anchored-set}
			\Set{\mu = \sum_{j=1}^{k} c_j \delta_{u_j}:\ c_j \in \CC},
			\eeq
			for any choice of $k \le 2N$ and distinct points $u_1,\ldots,u_k \in \TT^d$.
			Then the mapping \eqref{map} is injective also on the set
			\beql{free-set}
			\Set{\mu = \sum_{j=1}^{\ell} c_j \delta_{u_j}:\ \ell\le N, c_j \in \CC, u_j \in \TT^d, \mbox{ $c_j\neq 0$ and the $u_j$ are distinct}}.
			\eeq
		\end{lemma}
		\begin{proof}
			Suppose $\mu=\sum_{j=1}^{\ell_1} c_j \delta_{u_j}$ and $\nu =\sum_{j=1}^{\ell_2} d_j \delta_{v_j}$,
			$\ell_1, \ell_2 \le N$,
			are two different measures with the same image under \eqref{map}.
			Then $\mu-\nu$ is a non-zero measure supported on $\le 2N$ points and is mapped to $0$
			under \eqref{map}.
			This conflicts with the injectivity of \eqref{map} on the set \eqref{anchored-set}.
			
		\end{proof}
		
		For the mapping \eqref{map} on the set \eqref{anchored-set} to be injective it is necessary and sufficient that the matrix
		\beql{matrix}
		\left( e^{2\pi i u_j\cdot \omega} \right)_{j=1,\ldots,k,\ \omega\in\Omega}
		\eeq
		has rank $k$.
		Write $e_\omega(x) = e^{2\pi i \omega\cdot x}$.
		\begin{lemma}\label{lm:transpose}
			The mapping \eqref{map} on the set \eqref{anchored-set}
			is injective if and only if the functions $e_\omega$ form an interpolating
			set for the set $\Set{u_1,u_2,\ldots,u_k}$,
			i.e.\ for any values $d_j \in \CC$ there is a $\CC$-linear combination of the $e_\omega$ which 
			takes the value $d_j$ at $u_j$.
		\end{lemma}
		\begin{proof}
			The $e_\omega$ are interpolating at the $u_j$ if and only if the rank of the 
			matrix in \eqref{matrix} is $k$. By the preceding remark this is equivalent to the mapping \eqref{map} on the set \eqref{anchored-set} being injective.
			
		\end{proof}
		
		Let us complete the proof of Theorem \ref{th:connection}. If $\Omega$ is $(2N)$-interpolating
		and $u_1,\ldots,u_{2N}$ are distinct points in $\TT^d$
		it follows from Lemma \ref{lm:transpose} that the mapping $\mu\to\ft{\mu}|_\Omega$ is injective
		on the set $\eqref{anchored-set}$ (with $k=2N$). From Lemma \ref{lm:fix-the-locations} it follows that
		it is also injective on the set \eqref{free-set}, hence $\Omega$ is $N$-sufficient.
		
		If $\Omega$ is $N$-sufficient and $u_1,\ldots,u_k \in \TT^d$, $k\le N$, are distinct points then the matrix
		\eqref{matrix} has rank $k$. Therefore the mapping \eqref{map} is injective (with the $u_j$ fixed)
		and from Lemma \ref{lm:transpose} we get that the functions $e_\omega(x)$ are interpolating,
		which is what it means for $\Omega$ to be $N$-interpolating.
		
	\end{proof}
	
	Now it becomes clear that the situation in dimension 2 is significantly harder than in dimension 1.
	The reason is that interpolation is harder. Indeed, in dimension 1 one can easily find a set
	of $N$ functions the linear combinations of which can interpolate any data on any $N$ points. One such example
	of functions are the monomials $1, x, x^2, \ldots, x^{N-1}$ and another example
	are the functions $1, e^{2\pi ix}, e^{2\pi i 2x}, \ldots, e^{2\pi i (N-1)x}$ (when all $u_j \in \TT$).
	
	Such a set of functions is called a {\em Chebyshev or Haar system} and it is
	well known and easy to prove that {\em continuous} Chebyshev systems
	do not exist except in dimension 1 \cite{mairhuber1956haar}. Indeed, suppose that $S \subseteq \RR^2$ is an open set and
	the continuous functions $f_j:S\to\RR$, $j=1,2,\ldots,N$,
	are such that for any set of $N$ distinct points $u_j \in S$
	we can find a linear combination of the $f_j$ which interpolates any given \underline{real} data at the $u_j$.
	This means that for any choice of the distinct points $u_j$ the determinant of the matrix $f_i(u_j)$, $i,j=1,2,\ldots,N$,
	is non-zero. Choose then the $u_j$ to belong to an open disk in $S$ and carry out a continuous movement
	of the points $u_1$ and $u_2$ so that they do not collide between themselves and with any of the other points
	and such that, at the end of the motion, the two points have exchanged their positions. The determinant
	of the matrix has changed sign and, since it has varied continuously during the motion, it follows that the
	determinant has vanished at some point during the exchange, a contradiction.
	It is proved in \cite{mairhuber1956haar} that the existence of a (continuous) Chebyshev system on a set
	$S \subseteq \RR^d$ is only possible when $S$ is homeomorphic to a closed subset of a circle.
	
	This argument is strictly for the real case of course but it has been extended \cite{schoenberg1961unicity}
	to the case of complex functions:
	there is a complex Chebyshev system for domains in $\CC$ but not for domains in $\CC^2$ or in higher dimension.
	More specifically, it is proved in \cite{schoenberg1961unicity,henderson1973haar}
	that a \underline{complex} continuous Chebyshev
	system exists on a locally connected set $S$ if and only if $S$ is homeomorphic to a closed subset of $\RR^2$.
	This result allows us to prove that the situation in $\TT^2$ is strictly worse than in $\TT$.
	\begin{theorem}\label{th:strict-inequality}
		Let $d>1$. Suppose $\Omega\subseteq\ZZ^d$ is $N$-sufficient. Then $\Abs{\Omega} > N$.
	\end{theorem}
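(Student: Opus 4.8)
The plan is to reduce the statement to the non-existence of a complex continuous Chebyshev system on $\TT^2$, which is precisely the topological obstruction recalled just above. First I would apply Theorem \ref{th:connection}: since $\Omega$ is $N$-sufficient, it is in particular $N$-interpolating. By the definition of $N$-interpolating (equivalently, by Lemma \ref{lm:transpose}) this means that the characters $e_\omega$, $\omega\in\Omega$, interpolate arbitrary complex data on \emph{every} set of $N$ distinct points of $\TT^2$. It therefore suffices to rule out $\Abs{\Omega}\le N$.

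The case $\Abs{\Omega}<N$ is immediate: interpolating arbitrary data $d_1,\ldots,d_N$ at $N$ distinct points means that the linear map $(c_\omega)_{\omega\in\Omega}\mapsto\bigl(\sum_{\omega}c_\omega e_\omega(u_j)\bigr)_{j=1}^N$ from $\CC^{\Abs{\Omega}}$ to $\CC^N$ is onto, which forces $\Abs{\Omega}\ge N$. The real content is to exclude $\Abs{\Omega}=N$. In that case the interpolation matrix $\bigl(e_\omega(u_j)\bigr)_{j=1,\ldots,N,\ \omega\in\Omega}$ is square of size $N$, and solvability for every right-hand side is equivalent to its determinant being nonzero. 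Since this holds for \emph{every} choice of $N$ distinct points $u_1,\ldots,u_N\in\TT^2$, and the $N$ distinct characters $e_\omega$ are continuous, the family $\Set{e_\omega:\omega\in\Omega}$ is by definition a complex continuous Chebyshev (Haar) system on $\TT^2$.

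To reach a contradiction I would then invoke the characterization recalled above: a complex continuous Chebyshev system can exist on a locally connected set $S$ only if $S$ is homeomorphic to a closed subset of $\RR^2$. The torus $\TT^2$ is locally connected, being a manifold, so it remains to verify that it is \emph{not} homeomorphic to a closed subset of $\RR^2$. This is where the geometric substance lies: $\TT^2$ is a compact $2$-manifold without boundary, and by invariance of domain any injective continuous map of it into $\RR^2$ would have open image; but this image is also compact, hence a nonempty proper clopen subset of the connected plane, which is impossible. Thus $\TT^2$ admits no complex continuous Chebyshev system, contradicting the previous paragraph and forcing $\Abs{\Omega}>N$.

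The main obstacle is conceptual rather than computational: one must correctly match the interpolation property supplied by $N$-sufficiency (through Theorem \ref{th:connection}) to the precise hypotheses of the Mairhuber--Schoenberg type theorem, and confirm the topological fact that the closed $2$-manifold $\TT^2$ does not embed in $\RR^2$. Once these two ingredients are in place the argument is short.
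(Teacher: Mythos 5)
Your proof is correct and follows essentially the same route as the paper's: reduce via Theorem \ref{th:connection} to $N$-interpolation, observe that $\Abs{\Omega}=N$ would make $\Set{e_\omega}$ a complex continuous Chebyshev system on $\TT^2$, and contradict the Schoenberg--Henderson characterization since $\TT^2$ does not embed in the plane. You are in fact somewhat more complete than the paper, which silently skips the trivial case $\Abs{\Omega}<N$ and merely asserts the non-embeddability of $\TT^2$ that you verify via invariance of domain.
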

	\begin{proof}
		If such an $\Omega$ had size $N$ then, according to Theorem \ref{th:connection}, the corresponding
		set of characters $e_\omega(x) = e^{2\pi i \omega\cdot x}$, $\omega\in\Omega$, would be a continuous Chebyshev system on $\TT^d$,
		According to \cite{schoenberg1961unicity,henderson1973haar} this would make $\TT^d$ embeddable into the plane,
		which it is not.
		
	\end{proof}
	
	Again using the connection to interpolation let us now give a new proof, different from the one given in \cite[Section \MakeUppercase{\romannumeral 3}.C]{maravic2004exact} for the case $d=2$ and \cite{kunis2016multivariate, sauer2017prony} for general $d$, of the following fact.
	\begin{theorem}\label{thm:ndsufficient}
		There is $\Omega \subseteq \ZZ^d$ of size $O(N^d)$ which is $N$-sufficient.
	\end{theorem}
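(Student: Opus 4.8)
The plan is to exhibit an explicit product set of frequencies and to show it is $(2N)$-interpolating; by the first implication in Theorem \ref{th:connection} this immediately yields $N$-sufficiency. Concretely, I would take
$$
\Omega = \Set{0,1,\ldots,2N-1} \times \Set{0,1,\ldots,2N-1} \subseteq \ZZ^2,
$$
which has $\Abs{\Omega} = (2N)^2 = O(N^2)$. It then remains to verify that $\Omega$ is $(2N)$-interpolating, i.e.\ that for any $\ell \le 2N$ distinct points $u_1,\ldots,u_\ell \in \TT^2$ the characters $e_\omega$, $\omega\in\Omega$, can match arbitrary complex data at the $u_j$.

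By Lemma \ref{lm:transpose} this is equivalent to the matrix $\pp{e^{2\pi i \omega\cdot u_j}}_{j,\omega}$ having rank $\ell$, i.e.\ full row rank. Writing $u_j = (x_j,y_j)$ and $z_j = e^{2\pi i x_j}$, $w_j = e^{2\pi i y_j}$, the entries are the monomials $z_j^m w_j^n$, $0\le m,n\le 2N-1$, and full row rank fails precisely when there are scalars $\lambda_1,\ldots,\lambda_\ell$, not all zero, with
$$
\sum_{j=1}^\ell \lambda_j z_j^m w_j^n = 0, \qquad 0 \le m,n \le 2N-1.
$$
I would rule this out by a two-stage Vandermonde argument. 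Group the indices by their $x$-coordinate: let $\zeta_1,\ldots,\zeta_p$ be the distinct values among $z_1,\ldots,z_\ell$ (so $p\le \ell \le 2N$) and $J_s = \Set{j : z_j = \zeta_s}$. For each fixed $n$ the displayed relations read $\sum_{s=1}^p \zeta_s^m \pp{\sum_{j\in J_s}\lambda_j w_j^n} = 0$ for $m=0,\ldots,2N-1$; since the $\zeta_s$ are distinct and $p\le 2N$, the $2N\times p$ Vandermonde matrix $(\zeta_s^m)$ has rank $p$, forcing $\sum_{j\in J_s}\lambda_j w_j^n = 0$ for every $s$ and every $n=0,\ldots,2N-1$. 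Fixing $s$, the indices in $J_s$ share the coordinate $\zeta_s$, so by distinctness of the $u_j$ the values $\Set{w_j : j\in J_s}$ are pairwise distinct; a second Vandermonde argument in $w$ (using $n=0,\ldots,2N-1$, and $\Abs{J_s}\le 2N$) then gives $\lambda_j = 0$ for all $j\in J_s$. As $s$ was arbitrary, all $\lambda_j$ vanish, the matrix has full row rank, and $\Omega$ is $(2N)$-interpolating; Theorem \ref{th:connection} then makes it $N$-sufficient.

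There is no serious obstacle here: the only points requiring care are the bookkeeping in the double Vandermonde reduction and the two places where distinctness of the $u_j$ is used — distinctness across groups (automatic, since the $\zeta_s$ are chosen distinct) and, crucially, distinctness of the $w_j$ \emph{within} each group $J_s$, which is exactly where the hypothesis that the $u_j$ are distinct enters. The choice of $2N$ frequencies in each coordinate direction (rather than $N$) is precisely what keeps both Vandermonde steps available for up to $2N$ points, matching the $(2N)$-interpolating requirement that feeds Theorem \ref{th:connection}.
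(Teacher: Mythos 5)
Your proof is correct, but your route to $(2N)$-interpolation is genuinely different from the paper's. Both arguments follow the same outer strategy --- reduce $N$-sufficiency to $(2N)$-interpolation via Theorem \ref{th:connection} and Lemma \ref{lm:transpose} --- but they differ in the choice of $\Omega$ and in how interpolation is established. The paper takes the triangular set $\Omega = \Set{(m,n)\in\ZZ^2:\ 0\le m,n,\ m+n\le 2N-1}$ (of size $N(2N+1)$) and deduces interpolation from bivariate \emph{polynomial} interpolation: given $2N$ distinct points in $\CC^2$, it chooses a direction $u\in\CC^2$ along which the projections $t_i = u\cdot x_i$ are pairwise distinct, interpolates the data at the $t_i$ by a one-variable polynomial $p$ of degree $\le 2N-1$, and sets $q(x)=p(u\cdot x)$, whose monomials all lie in the triangular $\Omega$. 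You instead take the square grid $\Set{0,\ldots,2N-1}^2$ (of size $4N^2$, still $O(N^2)$) and prove full row rank of the character matrix directly, by grouping the points according to their common $z$-coordinate and applying Vandermonde nonsingularity twice: once in $z$ across the groups (distinct $\zeta_s$, $p\le 2N$), and once in $w$ within each group, where distinctness of the $u_j$ forces the $w_j$ to be distinct. Both Vandermonde steps are sound, since each system has at most $2N$ distinct nodes and $2N$ equations, giving full column rank. Your argument is more elementary and self-contained --- it needs no generic projection and no appeal to univariate polynomial interpolation --- while the paper's argument buys a somewhat smaller frequency set and, more importantly, illustrates the reduction of the two-dimensional problem to the one-dimensional one, a theme the paper reuses in Theorems \ref{th:max-k} and \ref{th:NlogN}.
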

	\begin{proof}
		It is enough to produce a set $\Omega\subseteq\ZZ^d$ of size $O(N^d)$ such that the set of corresponding
		exponentials $e^{2\pi i \omega\cdot x}$ is $2N$-interpolating, i.e. its linear combinations can interpolate
		any values at any $2N$ distinct points in $\TT^d$.
		We use the fact that for any set of $2N$ distinct points in $\CC^d$ and any complex data
		on them there is a complex
		polynomial in $d$ variables of degree at most $2N-1$ (the degree of each monomial is the sum of the exponents  of the variables) which interpolates the data.
		To see this observe that for any set of $2N$ distinct points $x_1, \ldots, x_{2N} \in \CC^d$ we can find a vector $u \in \CC^d$ such that the complex numbers $t_i = v\cdot x_i$ are all different. Let now $p$ be a one-variable polynomial of degree $\le 2N-1$ which interpolates the given data on the points $t_i$. Then $q(x) = p(v\cdot x)$ is a two-variable polynomial that interpolates the given data on the points $x_i$. The degree of $q$ is no larger than $2N-1$.
		
		Take $\Omega=\Set{m\in\NN_0^d: \sum_{j=1}^d m_j\le 2N-1}$.
		Suppose
		$
		u_1,\ldots,u_{2N}\in \TT^d
		$
		are distinct and $d_1,\ldots,d_{2N} \in \CC$.
		Let $p(z)=\sum_{m}p_{m}z^m$ have degree at most $2N-1$ and interpolate the data $d_k$ at the points
		$(e^{2\pi i u_{j1}}, \ldots, e^{2\pi i u_{jd}}) \in \CC^d$, $j=1,2,\ldots,2N$ (note that these are distinct points as
		the $u_d$ are in $\TT^d$ not in $\RR^d$).
		We have
		$$
		d_j = p(e^{2\pi i u_{j1}}, \ldots, e^{2\pi i u_{jd}}) = \sum_{m\in\Omega} p_{m} e^{2\pi i u_j\cdot m},
		$$
		which means that the functions $e^{2\pi i \omega\cdot x}$, $\omega\in\Omega$, are interpolating
		the arbitrary data $d_j$ at the $2N$ arbitrary points $u_j$, as we had to prove.
		
	\end{proof}
	
	\subsection{Small sufficient sets for $\TT^d$}\label{sec:t2}
	
	Next we provide a case where the sufficient number of coefficients for $N$ points of $\mathbb{T}^d$ is $O(k^{d-1}N)$, for some $1\leq k\leq N$.
	
	\begin{theorem}\label{th:max-k}
		Let $\mu = \sum_{j=1}^N c_j \delta_{u_j}$, where $u_j =(x_j,y_j) \in \TT\times \TT^{d-1}$ are distinct points and all $c_j \in \mathbb{C}\backslash \{0\}$. Assume that the number of points $u_j$ that share the same $x$ coordinate is at most $k$, for some $1\leq k\leq N$. Then $\mu$ can be recovered by $O(k^{d-1}N)$ Fourier coefficients.
	\end{theorem}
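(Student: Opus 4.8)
The plan is to reduce the two-dimensional recovery to repeated applications of the one-dimensional Prony method of Theorem \ref{onetorus}, organized in two stages, and to take as the set of Fourier coefficients
$$
\Omega = \Set{(m,r)\in\ZZ^2:\ -N\le m\le N,\ -k\le r\le k},
$$
which has size $(2N+1)(2k+1)=O(kN)$.

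First stage (recovering the $x$-coordinates). For each fixed $r$ with $-k\le r\le k$, I would read the horizontal line of data $\ft{\mu}(m,r)$, $m=-N,\ldots,N$. Grouping the points $u_j$ by their $x$-coordinate and writing $\xi_1,\ldots,\xi_L$ ($L\le N$) for the distinct $x$-values, one has
$$
\ft{\mu}(m,r)=\sum_{\ell=1}^L C_r(\xi_\ell)\,e^{-2\pi i m\xi_\ell},\qquad
C_r(\xi_\ell):=\sum_{j:\,x_j=\xi_\ell} c_j\, e^{-2\pi i r y_j}.
$$
Thus this line is exactly the Prony data (on $[-N,N]$) of the one-dimensional measure $\nu_r=\sum_{\ell} C_r(\xi_\ell)\,\delta_{\xi_\ell}$, which has at most $N$ masses. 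By Theorem \ref{onetorus} I recover $\nu_r$, i.e.\ the set $\Set{\xi_\ell: C_r(\xi_\ell)\neq 0}$ together with the values $C_r(\xi_\ell)$ there.

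The key point, and the step I expect to be the main obstacle, is that a single projection may miss some $x$-coordinate because of cancellation ($C_r(\xi_\ell)$ can vanish). This is overcome by varying $r$: for a fixed $\xi_\ell$ carrying $m_\ell\le k$ points with distinct $y$-coordinates and nonzero $c_j$, the quantity $C_r(\xi_\ell)=\sum_{j} c_j\, (e^{-2\pi i y_j})^r$ is a nonzero exponential sum in $r$ with $m_\ell$ distinct bases, so by invertibility of the $m_\ell\times m_\ell$ Vandermonde matrix it cannot vanish for all of $r=0,1,\ldots,m_\ell-1$. Hence every $\xi_\ell$ appears in the support of at least one $\nu_r$ with $0\le r\le k-1$, and taking the union of the recovered supports over $-k\le r\le k$ yields the complete set $X=\Set{\xi_1,\ldots,\xi_L}$ of distinct $x$-coordinates. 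Once $X$ is known, for every $\xi_\ell\in X$ and every $r\in[-k,k]$ the value $C_r(\xi_\ell)$ is known: it is the recovered mass if $\xi_\ell\in\supp\nu_r$ and is $0$ otherwise, matching across different $r$ being unambiguous since the $\xi_\ell$ are exact locations on $\TT$.

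Second stage (recovering the $y$-coordinates and coefficients). For each fixed $\xi_\ell$, the $\le k$ points lying above it form the one-dimensional measure $\mu_\ell=\sum_{j:\,x_j=\xi_\ell} c_j\,\delta_{y_j}$ on $\TT$, and by the displayed formula $C_r(\xi_\ell)=\ft{\mu_\ell}(r)$. Since I now know $C_r(\xi_\ell)$ for all $r=-k,\ldots,k$ and $\mu_\ell$ has at most $k$ masses, a second application of Theorem \ref{onetorus} (with $k$ in place of $N$) recovers $\mu_\ell$, i.e.\ the $y_j$ and the $c_j$ attached to $\xi_\ell$. Assembling these over all $\ell$ reconstructs $\mu=\sum_\ell\sum_{j:\,x_j=\xi_\ell} c_j\,\delta_{(\xi_\ell,y_j)}$. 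The only genuine difficulty is the cancellation phenomenon addressed by the Vandermonde argument above; the rest is a bookkeeping of two nested Prony recoveries, each legitimate because the relevant one-dimensional measures have at most $N$ (respectively at most $k$) point masses.
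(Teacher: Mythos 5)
Your proposal is correct and takes essentially the same approach as the paper: the identical coefficient set $\Omega=\left\{(m,r):\ |m|\le N,\ |r|\le k\right\}$ of size $O(kN)$, the same first stage of applying Theorem \ref{onetorus} to each horizontal row $r$ to recover the projected measures $\nu_r$, and the same second stage of applying Theorem \ref{onetorus} (with $k$ in place of $N$) to the fiber measure over each recovered $x$-coordinate. The only cosmetic difference is in the non-cancellation step guaranteeing that every $x$-coordinate is eventually detected: you argue via direct invertibility of the $m_\ell\times m_\ell$ Vandermonde matrix in the variable $r$, while the paper invokes the uniqueness part of Theorem \ref{onetorus} for measures with at most $k$ masses --- which is the same underlying fact.
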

	
	\begin{proof}
		Write $U = \Set{u_j:\;j=1,\ldots,N}$ and $X = \Set{x_j:\; j=1,\ldots,N}$ for the set of \textit{distinct} $x$ that appear as first coordinates for the points in $U$. Notice that $X$ may have fewer than $N$ points.
		
		Recall that
		\begin{align*}
			\ft{\mu}(m, n) &= \sum\limits_{j=1}^{N}c_je^{-2\pi i (mx_j+n\cdot y_j)}\\ 
			&= \sum_{x \in X} \left( \sum_{y: \; (x, y) \in U} c_{(x, y)} e^{-2\pi i n \cdot y} \right) e^{-2\pi i m x},
			\;m\in \mathbb{Z}, \, n\in \ZZ^{d-1}.
		\end{align*}
		For fixed $n$ the numbers $\ft{\mu}(m, n)$ are the Fourier coefficients of a collection of point masses at the points of $X$ (some of these point masses may be 0).
		
		Consider the data
		$$
		\ft{\mu}(-N, \ell),\; \ldots,\; \ft{\mu}(0, \ell),\; \ft{\mu}(1, \ell),\; \ldots,\; \ft{\mu}(N, \ell),
		$$
		where $\ell\in\ZZ^{d-1}$ is fixed.	By Theorem \ref{onetorus}, we can recover the sums
		\begin{equation}\label{sumsy}
			S(x, \ell) = \sum\limits_{y: \; (x, y) \in U} c_{(x, y)} e^{-2\pi i\ell \cdot y}, \quad \ell\in \ZZ^{d-1}.
		\end{equation}
		and the corresponding $x \in X$. Notice that we only ``see'' the $x$ for which $S(x, \ell) \neq 0$.

		For fixed $x \in X$, define the measure on $\TT^{d-1}$
		$$
		\lambda_x = \sum\limits_{y: \; (x, y) \in U} c_{(x, y)} \delta_{y},
		$$
		which is supported on at most $k$ locations on $\TT^{d-1}$. By Theorem \ref{thm:ndsufficient}, there is $\Omega\subset \ZZ^{d-1}$ of size at most $O(k^{d-1})$ that is $k$-sufficient. Thus, knowing the Fourier coefficients of $\lambda_x$ at $\ell\in\Omega$ is sufficient to recover the measure. Knowing these Fourier coefficients means precisely knowing the sums in (\ref{sumsy}), so we recover the points $y$ sitting over each $x \in X$ and the corresponding coefficients. The proof of the Theorem is complete.
		
	\end{proof}
	
	Finally we come to the main result of this section. One can view Theorem \ref{th:NlogN} as a more sophisticated version of Theorem \ref{th:max-k}, where the gain comes from being able to distinguish which $x \in \TT$ have many points projected onto them. This set of $x$ cannot be large.
	
	This theorem was first proved in \cite{sauer2018prony}, using techniques from computational algebra. We give an elementary proof.
	
	\begin{theorem}\label{th:NlogN}
		There is $\Omega \subseteq \ZZ^d$ of size $\Abs{\Omega} \leq C_d N \log^{d-1} (N)$ which is $N$-interpolating.
		One such set is the positive octant of the hyperbolic cross
		$$
		\Gamma_N^d = \left\{ n\in\NN_0^d ~:~\prod_{j=1}^d\left( n_j + 1 \right) \leq N\right\}.
		$$
		
	\end{theorem}
	
	\begin{proof}
		Let $U=\Set{u_j:\; j=1,\ldots, N}$. As noted in \eqref{matrix}, it suffices to show that the vectors
		$$
		v_\Gamma(u_j) = (e^{2\pi i u_j\cdot w})_{\omega\in \Gamma_N^d},\quad j=1,\ldots,N
		$$
		are linearly independent. We use induction in $d$. For the base case $d=1$, observe that the matrix $(e^{2\pi i u_j\omega})_{j=1, \ldots, N, \; \omega=0, \ldots, N-1}$ is Vandermonde. For $d\geq 2$, assume on the contrary that there are $c_u\in\CC\setminus\Set{0},\;u\in U$ (we can exclude nodes with zero coefficients) satisfying
\begin{equation*}
	\sum_{u\in U} c_{u} v_\Gamma(u) = 0.
\end{equation*}
		Let $X=\Set{x_j\in \TT^{d-1}:\; j=1,\ldots,N}$ be the set of distinct $x$ that appear as the first $d-1$ entries of points in $U$ (again, $X$ may have fewer than $N$ points). As in the proof of Theorem \ref{th:max-k}, we note that the condition of linear dependence rewrites as
		\begin{equation}\label{eq_decomp}
			\sum_{j=1}^N c_{u_j} e^{-2\pi i (m\cdot x_j+ky_j)} =  \sum_{x \in X} \left( \sum_{y: \; (x, y) \in U} c_{(x, y)} e^{-2\pi i k y} \right) e^{-2\pi i m \cdot x}=0,
			\quad m\in \ZZ^{d-1}, k\in\ZZ, \;(m,k)\in \Gamma_N^{d}.
		\end{equation} 
	Observe that we have
		\begin{equation}\label{eq:dataGamma}
			(m, 0) \in \Gamma_N^d \; \text{ for } m\in\Gamma_{N}^{d-1} \quad \text{ and } \quad	(m, k-1) \in \Gamma_N^d \; \text{ for } m\in\Gamma_{\lfloor N/k\rfloor}^{d-1}, \; k\in \NN.
	\end{equation}
		Then, as (\ref{eq_decomp}) holds for $(m, 0) \in \Gamma_N^d$ for all $m\in\Gamma_N^{d-1}$, we can use the induction hypothesis to conclude that
		\begin{equation}\label{eq_coef0}
			\sum_{y: \; (x, y) \in U} c_{(x, y)} e^{-2\pi i yk } = 0,\quad \text{for } k=0, \; \text{for all } x\in X.
		\end{equation} 
		We partition $X$ according to how many points of $U$ project to each point:
		\begin{equation*} 
			X = X_1 \sqcup \ldots \sqcup X_r\ \ \ (r \le N),
		\end{equation*}
		where $X_t = \Set{x \in X:\ \Abs{\Set{y: (x, y) \in U}} = t}$. By \eqref{eq_coef0}, we see that
		$$
		c_{(x,y)} = 0 \quad \text{for all } x\in X_1, \; (x,y)\in U,
		$$
		which contradicts $c_u\in\CC\setminus\Set{0}$ and thus $X_1=\emptyset$. Now we use the crucial observation that
		\begin{equation}\label{crucialobs}
			|U| = \sum_{j=1}^r j|X_j|
		\end{equation}
		which implies with $X_1=\emptyset$ that $|X|\leq \lfloor N/2\rfloor$.
	
		Taking data at \eqref{eq:dataGamma} for $k=0, 1$,  by the induction hypothesis we see that \eqref{eq_coef0} holds true for $k=0,1$. If $X_2$ is not empty, then, for any $x\in X_2$, the summands in \eqref{eq_coef0} are exactly $2$, for each $k\in \{0,1\}$.  Thus, we have a homogeneous $2\times 2$ system, with a Vandermonde matrix of coefficients, so 
		$$
		c_{(x,y)} = 0 \quad \text{for all } x\in X_2, \; (x,y)\in U,
		$$ a contradiction. That allows us to deduce $X_2=\emptyset$, giving us $|X|\leq \lfloor N/3\rfloor$. Repeating the argument $r$ times results in the contradiction $X=\emptyset$.
	\end{proof}
	
	We use Theorem \ref{th:connection} to see that $O(N\log^{d-1} N)$ samples are sufficient for unique determination of a point measure $\mu$ of at most $N$ peaks.
	
	\begin{corollary} \label{cor:ddimsufficient}
		There is $\Omega \subseteq \ZZ^d$ of size $\Abs{\Omega} = O( N \log^{d-1} (N))$ which is $N$-sufficient. One such set is $\Gamma_{2N}^d$.
	\end{corollary}
	
Note, however, that the proof cannot be converted in an algorithm recovering the measure from its Fourier samples. Next, we show that such an algorithm exists for a slightly larger sampling set.
	
	\begin{theorem} \label{thm:ONreco}
		Any measure $\mu = \sum_{j=1}^K c_j\delta_{u_j},~K\leq N$, where $u_j=(x_j,y_j)\in\TT^{d-1}\times \TT$ are distinct points and $c_j\in\CC\setminus\Set{0}$, is determined by its Fourier coefficient on the set
		\begin{equation}
			\tilde \Gamma_N^d = \left\{ n\in\NN_0^d ~:~\prod_{j=1}^d \left\lceil \frac{n_j+1} 2 \right\rceil \leq N\right\}. \label{cross2}
		\end{equation} 
	\end{theorem}
	
	\begin{proof}
		The proof works by using one-dimensional methods to give a large set of candidates. These candidates are all point measures with at most $N$ summands. By Corollary \ref{cor:ddimsufficient}, only one can fit all the available data, as $\Gamma_{2N}^d\subset\tilde\Gamma_N^d$ . We use the same notation as in the proof of Theorem \ref{th:NlogN}.
		
		Again, we use induction in $d$, where the case $d=1$ is a consequence of Theorem \ref{onetorus}. Further, note that for $r\in\NN\setminus\Set{0}$ we have 
		\begin{equation}\label{eq_slicetildegamma}
			(m, k)\in \tilde \Gamma_N^d \text{ for } m\in\tilde\Gamma_{\lfloor N/r\rfloor}^{d-1} \text { and } k=2r-2, \; 2r-1.
		\end{equation}
		We again use the decomposition \eqref{eq_decomp} and introduce the notation
		$$
		c_x(k) = \sum_{y: \; (x, y) \in U} c_{(x, y)} e^{-2\pi i yk },
		$$
		to write $$\ft{\mu}(m, k)=\sum_{x \in X} c_x(k)e^{-2\pi i m \cdot x}, \quad m\in \ZZ^{d-1}, \; k\in \ZZ.$$
		Applying our algorithm for $d-1$ to the samples $\ft{\mu}(m, 0)$, where $m\in\tilde\Gamma_N^{d-1}$, we determine the quantities 
		$$
		c_x(0) = \sum_{y:\; (x, y) \in U} c_{(x, y)}
		$$
		as well as all $x \in X$ for which $c_x(0) \neq 0$. This includes all $x \in X_1$.
		Again from our algorithm for $d-1$, the numbers $\ft{\mu}(m, 1)$, $m\in\tilde\Gamma_N^{d-1}$, determine the quantities
		$$
		c_x(1) = \sum_{y:\; (x, y) \in U} c_{(x, y)} e^{-2\pi i y},
		$$
		and all $x \in X$ for which $c_x(1) \neq 0$. 
		So, by the the data \eqref{cross2} corresponding to $n=(m, k)$, with $m\in\tilde\Gamma_N^{d-1}$ and $k=0, 1$, we can determine, for each point $x$, the Fourier coefficients
		$$
		\ft{\lambda_x}(0),\ \  \ft{\lambda_x}(1)
		$$
		of the one-dimensional measure sitting over $x$:
		$$
		\lambda_x = \sum_{y:\; (x, y) \in U} c_{(x, y)} \delta_y.
		$$
		We also determine those $x \in X$ for which at least one of the numbers
		$$
		c_x(0),\; c_x(1)
		$$
		is non-zero. We collect them in the set $Z_1$. As we said above this includes all $x \in X_1$:
		$$
		X_1\subset Z_1\subset X.
		$$
		
		No we assume for the moment that we could identify the subset $X_1$ in $Z_1$. Then we could determine the part of $\mu$ sitting over $X_1$:
		$$
		\mu_1 = \sum_{x \in X_1} \sum_{y:\;(x, y) \in U} c_{(x, y)} \delta_{(x, y)}.
		$$
		Subtracting $\ft{\mu_1}(m, k)$ from $\ft{\mu}(m, k)$ we see that we know the Fourier coefficients of the measure $\mu-\mu_1$ for all indices in \eqref{cross2}.
		
		The next stage is to determine $\mu_2$, the part of $\mu$ sitting over $X_2$, the points on the $x$-axis with two point masses over them. We will do this using the data
		\beql{reduced-data}
		\ft{\mu-\mu_1}(m, k),\ \  \text{ for }(m, k) \in \tilde\Gamma_{N}^{d-1}\times \{0,1\} \text{ and }(m,k)\in \tilde\Gamma^{d-1}_{\Floor{\frac{N}{2}}}\times \{ 2,3\}.
		\eeq
		From \eqref{crucialobs} the measure $\mu-\mu_1$ contains at most $N/2$ point masses, therefore, by the induction hypothesis, the data \eqref{reduced-data} are now enough to determine the quantities
		$$
		c_x(k) = \sum_{y:\;(x, y) \in U} c_{(x, y)} e^{-2\pi i k y}, \quad x \in X_2 \sqcup X_3 \sqcup \cdots \sqcup X_r, \; k = 0, \ldots, 3,
		$$
		and those $x \in X$ for which at least one of the numbers $c_x(k)$, $k=0, \ldots, 3$, is non-zero, which we collect in the set $Z_2$. This includes all $x \in X_2$ by using the induction hypothesis. Again, assume we were somehow able to identify $X_2$ from the larger set $Z_2$.
		
		If $x \in X_2$ this information suffices, because of Theorem \ref{onetorus}, to determine $\lambda_x$, that is, the part of measure $\mu$ sitting over $x$. So the data $(m, k)\in \NN_0^{d-1}\times \NN_0$ in \eqref{cross2} with $k \le 3$ determine $\mu_2$, the part of measure $\mu$ sitting over $X_2$.
		
		This process continues. The next step is to find, using the data
		$$
		(\mu-\mu_1-\mu_2)^\wedge (m, k),\ \ \text{ where }(m, k)\in \NN_0^{d-1}\times \NN_0 \text{ as in } \eqref{cross2} \text{ with } k \le 5
		$$
		the measure $\mu_3$, the part of $\mu$ sitting over $X_3$. This is again possible since $\mu-\mu_1-\mu_2$ contains at most $N/3$ point masses.
		
		Continuing like this we determine the measure $\mu$ completely.
		
		However, we do not know which subset of $Z_1$ is $X_1$. Instead, we run the whole procedure for every possible choice, not only for $X_1$ but also $X_2\subset Z_2$ and so on. As all sets are finite and only one solution exists, this procedure will recover $\mu$ in a finite number of operations.
	\end{proof}
	
	Theorem \ref{thm:ONreco} is a new result, showing that in principle $O(N\log^{d-1} N)$ samples enable not only to conclude uniqueness (which allows to search the continuous parameter space to recover $\mu$), but to recover the measure using a finite number of computational steps. It was sketched in the PhD thesis \cite{diederichs2018sparse} of the first-named author. However, the algorithm does not have polynomial runtime. It would be interesting to examine whether this is a conceptual barrier or whether more efficient methods exist. The result should be contrasted with the result of Sauer \cite{sauer2018prony}, which uses $O(N^2\log^{2d-2} N)$ samples but has polynomial runtime. 
	
	
	\bibliographystyle{alpha}
	\bibliography{interp}
	
\end{document}